\newcommand{\N}{{\mathbb N}}
\newcommand{\R}{{\mathbb R}}
\newcommand{\be}{\begin{equation}}
\newcommand{\ee}{\end{equation}}
\numberwithin{equation}{section}
\newtheorem{theorem}{Theorem}[section]
\newtheorem{proposition}[theorem]{Proposition}
\newtheorem{corollary}[theorem]{Corollary}
\newtheorem{lemma}[theorem]{Lemma}
\newtheorem{definition}[theorem]{Definition}
\theoremstyle{definition}
\newtheorem{remark}[theorem]{Remark}
\newcommand{\brm}{\begin{remark}\rm}
\newcommand{\erm}{\end{remark}}
\newcommand{\brms}{\begin{remark}\rm}
\newcommand{\erms}{\end{remark}}
\newcommand{\bte}{\begin{theorem}}
\newcommand{\ete}{\end{theorem}}
\newcommand{\bpr}{\begin{proposition}}
\newcommand{\epr}{\end{proposition}}
\newcommand{\ble}{\begin{lemma}}
\newcommand{\ele}{\end{lemma}}
\newcommand{\beq}{\begin{equation}}
\newcommand{\eeq}{\end{equation}}
\newcommand{\bdm}{\begin{displaymath}}
\newcommand{\edm}{\end{displaymath}}
\numberwithin{equation}{section}
\newcommand{\bos}{\begin{remark}\rm}
\newcommand{\eos}{\end{remark}}
\newcommand{\ben}{\begin{enumerate}}
\newcommand{\een}{\end{enumerate}}
\title[Generalized Polya-Szeg\"o inequality]{Generalized 
Polya-Szeg\"o inequality}
\author[H.\ Hajaiej]{Hichem Hajaiej}
\thanks{Ipeit (Institut pr\'eparatoire aux \'etudes d'ing\'enieur de Tunis)
 2, Rue Jawaher Lel Nahru -
1089 Montfleury - Tunis,
 Tunisie.
E-mail: {\em hichem.hajaiej@gmail.com}}
\address{Ipeit (Institut pr\'eparatoire aux \'etudes d'ing\'enieur de Tunis)
\newline\indent
 2, Rue Jawaher Lel Nahru -
1089 Montfleury - tunis
 Tunisie}
\subjclass[2000]{46E35, 26B25, 26B99, 47B38}
\keywords{Generalized Polya-Szeg\"o inequalities, identity results, radial symmetry,
non-compact minimization problems}
\begin{document}

\begin{abstract}
We generalize Polya-Szeg\"o inequality to integrands depending on $u$ and its
gradient. Under minimal additional assumptions, we establish equality cases in this
generalized inequality. 
\end{abstract}
\maketitle

\medskip
\begin{center}
\begin{minipage}{11cm}
\footnotesize
\tableofcontents
\end{minipage}
\end{center}

\medskip

\section{Introduction}

The Polya-Szeg\"o inequality asserts that the $L^2$ norm of the gradient of a positive function $u$
in $W^{1,p}(\R^N)$ cannot increase under Schwarz symmetrization,
\begin{equation}
	\label{classic}
\int_{\R^N}|\nabla u^*|^2dx\leq \int_{\R^N}|\nabla u|^2dx.
\end{equation}
The Schwarz rearrangement of $u$ is denoted here by $u^*$. Inequality~\eqref{classic} has numerous 
applications in physics. It was first used in 1945 by G.\ Polya and G. Szeg\"o
to prove that the capacity of a condenser diminishes or remains unchanged by 
applying the process of Schwarz symmetrization (see~\cite{polya1}).
Inequality~\eqref{classic} was also the key ingredients to show that, among
all bounded bodies with fixed measure, balls have the minimal capacity (see~\cite[Theorem 11.17]{liebloss}).
Finally~\eqref{classic} has also played a crucial role in the solution of the famous
Choquard's conjecture (see~\cite{lieb}). It is heavily connected to the isoperimetric inequality
and to Riesz-type rearrangement inequalities. Moreover, it turned out that~\eqref{classic} 
is extremely helpful in establishing the existence of ground states solutions of the nonlinear
Schr\"odinger equation
\begin{equation}
	\label{Sch}
\begin{cases}
{\rm i}\partial_t\Phi+\Delta\Phi+f(|x|,\Phi)=0  & \text{in $\R^N\times(0,\infty)$},\\
\Phi(x,0)=\Phi_0(x)  & \text{in $\R^N$}.
\end{cases}
\end{equation}
A ground state solution of equation~\eqref{Sch} is a positive  
solution to the following associated variational problem
\begin{equation}
\label{varPbSc}
\inf\left\{\frac{1}{2}\int_{\R^N}|\nabla u|^2dx-\int_{\R^N}F(|x|,u)dx:\, u\in H^1(\R^N),\,\,\|u\|_{L^2}=1\right\},
\end{equation}
where $F(|x|,s)$ is the primitive of $f(|x|,\cdot)$ with $F(|x|,0)=0$. Inequality~\eqref{classic}
together with the generalized Hardy-Littlewood inequality were crucial to prove that~\eqref{varPbSc}
admits a radial and radially decreasing solution.\ Furthermore, under appropriate regularity
assumptions on the nonlinearity $F$, there exists a Lagrange multiplier $\lambda$ such that any minimizer 
of~\eqref{varPbSc} is a solution of the following semi-linear elliptic PDE
$$
-\Delta u+f(|x|,u)+\lambda u=0,\quad\text{in $\R^N$}.
$$
We refer the reader to~\cite{hajstuampa} for a detailed analysis. The same approach applies 
to the more general quasi-linear PDE
$$
-\Delta_p u+f(|x|,u)+\lambda u=0,\quad\text{in $\R^N$}.
$$
where $\Delta_p u$ means ${\rm div}(|\nabla u|^{p-2}\nabla u)$, and we can derive similar 
properties of ground state solutions since~\eqref{classic} extends to gradients that
are in $L^p(\R^N)$ in place of $L^2(\R^N)$, namely
\begin{equation}
	\label{p-classic}
\int_{\R^N}|\nabla u^*|^pdx\leq \int_{\R^N}|\nabla u|^pdx.
\end{equation}
Due to the multitude of applications in physics, rearrangement inequalities like~\eqref{classic} and
\eqref{p-classic} have attracted a huge number of mathematicians from the middle of the
last century. Different approaches were built up to establish
these inequalities such as heat-kernel methods, slicing and cut-off techniques 
and two-point rearrangement.

A generalization of inequality~\eqref{p-classic} to suitable convex integrands $A:\R_+\to\R_+$,
\begin{equation}
	\label{A-classic}
\int_{\R^N}A(|\nabla u^*|)dx\leq \int_{\R^N} A(|\nabla u|)dx,
\end{equation}
was first established by Almgren and Lieb (see \cite{almgrenlieb}). Inequality~\eqref{A-classic}
is important in studying the continuity and discontinuity of Schwarz symmetrization
in Sobolev spaces (see e.g.\ \cite{almgrenlieb,burchard}). It also permits us to study
symmetry properties of variational problems involving integrals of type $\int_{\R^N}A(|\nabla u|)dx$.
Extensions of Polya-Szeg\"o inequality to more general operators of the form
$$
j(s,\xi)=b(s)A(|\xi|),\quad s\in\R,\,\xi\in\R^N,
$$
on bounded domains have been investigated by Kawohl, Mossino and Bandle. More precisely,
they proved that
\begin{equation}
	\label{PS-split}
\int_{\Omega^*}b(u^*)A(|\nabla u^*|)dx\leq \int_{\Omega} b(u)A(|\nabla u|)dx,
\end{equation}
where $\Omega^*$ denotes the ball in $\R^N$ centered at the origin having
the Lebesgue measure of $\Omega$, under suitably convexity, monotonicity
and growth assumptions (see e.g.\ \cite{bandle,kawohl,mossino}).
Numerous applications of~\eqref{PS-split} have been discussed in the above references.
In~\cite{tahraoui}, Tahraoui claimed that a general integrand $j(s,\xi)$ with appropriate properties
can be written in the form
$$
\sum_{i=1}^\infty b_i(s)A_i(|\xi|)+R_1(s)+R_2(\xi),\quad s\in\R,\,\xi\in\R^N,
$$
where $b_i$ and $A_i$ are such that inequality~\eqref{PS-split} holds. However, there are
some mistakes in~\cite{tahraoui} and we do not believe that this density type result holds true.
Until quite recently there were no results dealing with the generalized 
Polya-Szeg\"o inequality, namely
\begin{equation}
	\label{generalPS}
\int_{\Omega^*}j(u^*,|\nabla u^*|)dx\leq \int_{\Omega} j(u,|\nabla u|)dx.
\end{equation}
While writing down this paper we have learned about a very recent survey by 
F.\ Brock \cite{brock} who was able to prove~\eqref{generalPS} under continuity,
monotonicity, convexity and growth conditions.

Following a completely different approach, we prove~\eqref{generalPS}
without requiring any growth conditions on $j$. As it can be easily seen it is important to
drop these conditions to the able to cover some relevant applications.
Our approach is based upon a suitable approximation of the Schwarz symmetrized
$u^*$ of a function $u$. More precisely, if  
$(H_n)_{n\geq 1}$ is a dense sequence in the set of closed half spaces $H$ containing $0$
and $u\in L^{p}_+(\R^N)$, there exists a 
sequence $(u_n)$ consisting of iterated polarizations of the $H_n$s which
converges to $u^*$ in $L^p(\R^N)$ (see~\cite{explicit,JvS}). 
On the other hand, a straightforward computation shows that
$$
	\|\nabla u\|_{L^p(\R^N)}=\|\nabla u_0\|_{L^p(\R^N)}=\cdots=\|\nabla u_n\|_{L^p(\R^N)},
	\quad\text{for all $n\in\N$}.
$$
By combining these properties with the weak lower semicontinuity of the functional
$J(u)=\int j(u,|\nabla u|)dx$ enable us to conclude (see Theorem~\ref{gpz}). Note that~\eqref{A-classic}
was proved using coarea formula; however this approach does not apply to integrands
depending both on $u$ and its gradient since one has to apply simultaneously the coarea formula
to $|\nabla u|$ and to decompose $u$ with the Layer-Cake principle.

Notice that Brock's method is based on an intermediate maximization problem and cannot yield to the establishment
of equality cases. Our approximation approach was also fruitful in determining the relationship between
$u$ and $u^*$ such that
\begin{equation}
	\label{EqgeneralPS}
\int_{\R^N}j(u^*,|\nabla u^*|)dx=\int_{\R^N} j(u,|\nabla u|)dx.
\end{equation}
Indeed, under very general conditions on $j$, we prove that~\eqref{EqgeneralPS}
is equivalent to 
$$
\int_{\R^N}|\nabla u^*|^pdx=\int_{\R^N}|\nabla u|^pdx.
$$ 
For $j(\xi)=|\xi|^p$, identity cases were completely studied 
in the breakthrough paper of Brothers and Ziemer~\cite{bzim}.

\vskip15pt
\noindent
The paper is organized as follows.
\vskip4pt
\noindent
Section~\ref{prelimfacts} is dedicated to some preliminary stuff, especially the ones 
concerning the invariance of a class of functionals under polarization. 
These observations are crucial, in Section~\ref{PSineq}, to establish 
in a simple way the generalized Polya-Szeg\"o inequality.
\medskip

\vskip15pt
\begin{center}\textbf{Notations.}\end{center}
\begin{enumerate}
\item For $N\in\N$, $N\geq 1$, we denote by $|\cdot|$ the euclidean norm in $\R^N$.
\item $\R_+$ (resp.\ $\R_-$) is the set of positive (resp.\ negative) real values.
\item $\mu$ denotes the Lebesgue measure in $\R^N$.
\item $M(\R^N)$ is the set of measurable functions in $\R^N$.
\item For $p>1$ we denote by $L^p(\R^N)$ the space of $f$ in $M(\R^N)$
with $\int_{\R^N}|f|^pdx<\infty$.
\item The norm $(\int_{\R^N}|f|^pdx)^{1/p}$ in $L^p(\R^N)$ is denoted by $\|\cdot\|_p$.
\item For $p>1$ we denote by $W^{1,p}(\R^N)$ the Sobolev space of functions $f$ in $L^p(\R^N)$
having generalized partial derivatives $D_if$ in $L^p(\R^N)$, for $i=1,\dots, N$.
\item $D^{1,p}(\R^N)$ is the space of measurable functions whose gradient is in $L^p(\R^N)$.
\item $L^{p}_+(\R^N)$ is the cone of positive functions of $L^{p}(\R^N)$.
\item $W^{1,p}_+(\R^N)$ is the cone of positive functions of $W^{1,p}(\R^N)$.
\item For $R>0$, $B(0,R)$ is the ball in $\R^N$ centered at zero with radius $R$.
\end{enumerate}
\medskip

\section{Preliminary stuff}
\label{prelimfacts}
In the following $H$ will design a closed half-space of $\R^N$ containing the origin, $0_{\R^N}\in H$.
We denote by ${\mathcal H}$ the set of closed half-spaces of $\R^N$ containing the origin. We shall
equip ${\mathcal H}$ with a topology ensuring that $H_n\to H$ as $n\to\infty$ if there is a sequence of
isometries $i_n:\R^N\to\R^N$ such that $H_n=i_n(H)$ and $i_n$ converges to the identity as $n\to\infty$.
\vskip4pt
We first recall some basic notions. For more details, we refer the reader to~\cite{burchhaj}.

\begin{definition}
	A reflection $\sigma:\R^N\to\R^N$ with respect to $H$
	is an isometry such that the following properties hold
	\begin{enumerate}
		\item $\sigma\circ\sigma (x)=x$, for all $x\in\R^N$;
		\item the fixed point set of $\sigma$ separates $\R^N$ in $H$ and $\R^N\setminus H$ (interchanged by $\sigma$);
		\item $|x-y|<|x-\sigma(y)|$, for all $x,y\in H$.
	\end{enumerate}
	Given $x\in\R^N$, the reflected point $\sigma_H(x)$ will also be denoted by $x^H$.
\end{definition}

\begin{definition}
Let $H$ be a given half-space in $\R^N$.
The two-point rearrangement (or polarization) of a nonnegative real valued function
$u:\R^N\to\R_+$ with respect to a given reflection $\sigma_H$ (with respect to $H$)
is defined as 
$$
u^H(x):=
\begin{cases}
	\max\{u(x),u(\sigma_H(x))\}, & \text{for $x\in H$}, \\
	\min\{u(x),u(\sigma_H(x))\}, & \text{for $x\in \R^N\setminus H$}.
\end{cases}
$$
\end{definition}

\begin{definition}
	We say that a nonnegative measurable function $u$ is symmetrizable if
$\mu(\{x\in\R^N: u(x)>t\})<\infty$ for all $t>0$. The space of symmetrizable 
functions is denoted by $F_N$ and, of course, $L^p_+(\R^N)\subset F_N$.
Also, two functions $u,v$ are said to be equimeasurable (and we shall write $u\sim v$) when 
$$
\mu(\{x\in\R^N: u(x)>t\})=\mu(\{x\in\R^N: v(x)>t\}),
$$ 
for all $t>0$. 
\end{definition}

\begin{definition}
	For a given $u$ in $F_N$, the Schwarz symmetrization $u^*$ of $u$ is the unique
	function with the following properties (see e.g.~\cite{hajstu})
	\begin{enumerate}
		\item $u$ and $u^*$ are equimeasurable;
		\item $u^*(x)=h(|x|)$, where $h:(0,\infty)\to\R_+$ is a continuous and decreasing function.
	\end{enumerate}
	In particular, $u$, $u^H$ and $u^*$ are all equimeasurable functions (see e.g.~\cite{bae}).
	\end{definition}

\begin{lemma}
	\label{concretelem}
	Let $u\in W^{1,p}_+(\R^N)$ and let $H$ be a given half-space. Then $u^H\in W^{1,p}_+(\R^N)$ and, setting
	$$
	v(x):=u(x^H),\quad w(x):=u^H(x^H),\qquad x\in\R^N,
	$$
	the following facts hold:
	\begin{enumerate}
		\item We have
	\begin{align*}
		\nabla u^H(x)&=
		\begin{cases}
			\nabla u(x) & \text{for $x\in \{u>v\}\cap H$}, \\
			\nabla v(x) & \text{for $x\in \{u\leq v\}\cap H$}, \\
		\end{cases} \\
		\nabla w(x)&=
		\begin{cases}
			\nabla v(x) & \text{for $x\in \{u>v\}\cap H$}, \\
			\nabla u(x) & \text{for $x\in \{u\leq v\}\cap H$}. \\
		\end{cases}
	\end{align*}
	\item For all $i=1,\dots,N$ and $p\in (1,\infty)$, we have
	\begin{equation}
		\label{dis4disocase}
		\|D_i u^H\|_{L^{p}(\R^N)}= 	\|D_i u\|_{L^{p}(\R^N)}.
	\end{equation}
	\item Let $j:[0,\infty)\times [0,\infty)\to\R$ be a Borel measurable function. Then
 \begin{equation}
	\label{fundineq}
		\int_{\R^N}j(u,|\nabla u|)dx=\int_{\R^N}j(u^H,|\nabla u^H|)dx,
	\end{equation}
	provided that $0\in H$ and that both integrals are finite.
\end{enumerate}
\end{lemma}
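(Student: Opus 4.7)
My strategy is to reduce everything to the classical formulas for the weak gradient of $\max\{f,g\}$ and $\min\{f,g\}$ of two Sobolev functions, and then exploit the fact that the isometry $\sigma_H$ preserves Lebesgue measure and the Euclidean norm of the gradient. Fix Cartesian coordinates in which $H=\{x_1\ge 0\}$ and $\sigma_H(x_1,x')=(-x_1,x')$, so that $v(x)=u(\sigma_H(x))$ satisfies $v\in W^{1,p}_+(\R^N)$ with $|\nabla v(x)|=|\nabla u(\sigma_H(x))|$ and $\|D_i v\|_p=\|D_i u\|_p$ for every $i$.

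For item (1), the observation is that on the open half-space $\{x_1>0\}$ one has $u^H=\max\{u,v\}$, while on $\R^N\setminus H$ one has $u^H=\min\{u,v\}$. The standard lattice formulas in $W^{1,p}$ give the claimed expressions for $\nabla u^H$ in each open piece, and the analogous identity for $\nabla w$ follows immediately from $w(x)=u^H(\sigma_H(x))=\min\{u,v\}(x)$ for $x\in H$. To merge the two pieces into a single element of $W^{1,p}(\R^N)$, I use that $\sigma_H$ fixes the hyperplane $\partial H$, hence $u=v$ on $\partial H$, so $\max\{u,v\}=\min\{u,v\}=u$ there; the traces of the two halves of $u^H$ therefore coincide, and a standard trace-gluing argument across $\partial H$ produces $u^H\in W^{1,p}_+(\R^N)$ with the distributional gradient prescribed by the two branch formulas.

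For item (2), decompose $\int_{\R^N}|D_i u^H|^p\,dx$ according to the partition $(\{u>v\}\cap H)\cup(\{u\le v\}\cap H)\cup(\{u<v\}\cap(\R^N\setminus H))\cup(\{u\ge v\}\cap(\R^N\setminus H))$ and substitute the explicit formulas for $\nabla u^H$ in each piece. The key step is to apply the change of variable $y=\sigma_H(x)$ to the two integrals supported on $\R^N\setminus H$: it is measure-preserving, it sends those two sets bijectively onto $\{u>v\}\cap H$ and $\{u\le v\}\cap H$, and the sign acquired by the first component of $\nabla u$ disappears in the $p$th power while the other components are unchanged. The four contributions collapse to $\int_H(|D_i u|^p+|D_i v|^p)\,dy$, which equals $\int_{\R^N}|D_i u|^p\,dx$ after one further application of the same change of variable.

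For item (3), the same partition-and-reflect computation goes through verbatim with $|\cdot|^p$ replaced by $j(\cdot,|\cdot|)$: the identities $u(x)=v(\sigma_H(x))$ and $|\nabla u(x)|=|\nabla v(\sigma_H(x))|$ interchange $j(u,|\nabla u|)$ with $j(v,|\nabla v|)$ under the reflection, and the four pieces recombine to give~\eqref{fundineq}; measurability of $j(u^H,|\nabla u^H|)$ is automatic since $j$ is Borel, while finiteness of both sides is hypothesised. The only nontrivial ingredient is the global $W^{1,p}$ regularity of $u^H$ obtained by gluing along $\partial H$ in item (1); once that is in hand, (2) and (3) reduce to careful bookkeeping.
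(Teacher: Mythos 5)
Your argument follows essentially the same route as the paper's proof: item (1) via the lattice/$(u-v)^{+}$ decomposition of $u^H$ on $H$, and items (2)--(3) via the four-set partition of $\R^N$ combined with the measure-preserving change of variables $y=\sigma_H(x)$, which is precisely the computation the paper carries out for \eqref{fundineq}. On item (1) you are in fact more careful than the paper: the identities $u^H=\max\{u,v\}$ and $w=\min\{u,v\}$ hold only on $H$, so some gluing across $\partial H$ is genuinely needed to conclude $u^H\in W^{1,p}_+(\R^N)$, and your observation that $u$ and $v$ have the same trace on $\partial H$ (because $\sigma_H$ fixes $\partial H$ pointwise) supplies exactly that, whereas the paper passes over it in silence. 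The one point to flag is item (2): your opening normalization ``fix Cartesian coordinates in which $H=\{x_1\ge 0\}$'' is legitimate for the coordinate-free statements (1) and (3), but not for \eqref{dis4disocase}, since the individual norms $\|D_iu\|_{p}$ are not invariant under the rotation needed to bring a general $H$ into that position. Indeed, if $\partial H$ is not parallel to a coordinate hyperplane the reflection mixes the components of the gradient and \eqref{dis4disocase} fails as stated: taking $u$ supported in the interior of $\R^N\setminus H$ gives $u^H=u\circ\sigma_H$, hence $D_iu^H=\sum_j R_{ij}\,(D_ju)\circ\sigma_H$ with $R$ the linear part of $\sigma_H$, and $\|D_iu^H\|_{p}\neq\|D_iu\|_{p}$ in general. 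What your computation (and the paper's one-line proof of (2)) actually establishes for arbitrary $H$ is the rotation-invariant identity $\|\nabla u^H\|_{L^p}=\|\nabla u\|_{L^p}$, i.e.\ the case $j(s,t)=t^p$ of item (3); the componentwise identity holds only when $\sigma_H$ is a reflection in a coordinate hyperplane. This is a defect of the lemma as stated rather than of your argument specifically, but your coordinate normalization conceals it instead of resolving it.
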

\begin{proof}
	Observing that, for all $x\in H$, we have
	$$
	u^H(x)=v(x)+(u(x)-v(x))^+,\qquad
	w(x)=u(x)-(u(x)-v(x))^+,
	$$
	in light of~\cite[Corollary 6.18]{liebloss} it follows that
	$v,w$ belong to $W^{1,p}_+(\R^N)$. Assertion (1) follows by a simple direct computation.
	Assertion (2) follows as a consequence of assertion (1). Concerning (3), writing $\sigma_H$ 
    as $\sigma_H(x)=x_0+Rx$, where $R$ is an orthogonal linear transformation,
	taking into account that $|{\rm det}\,R|=1$ and
	$$
	|\nabla v(x)|=|\nabla (u(\sigma_H(x)))|=|R(\nabla u(\sigma_H(x)))|=|(\nabla u)(\sigma_H(x))|, 
	$$ 
	we have
	\begin{align*}
		\int_{\R^N}j(u,|\nabla u|)dx & =\int_{H}j(u,|\nabla u|)dx+\int_{\R^N\setminus H}j(u,|\nabla u|)dx \\
&		=\int_{H}j(u,|\nabla u|)dx+\int_{H}j(u(\sigma_H(x)),|(\nabla u)(\sigma_H(x))|)dx \\
&		=\int_{H}j(u,|\nabla u|)dx+\int_{H}j(v,|\nabla v|)dx.
	\end{align*}
	In a similar fashion, we have
	\begin{align*}
		\int_{\R^N}j(u^H,|\nabla u^H|)dx & =\int_{H}j(u^H,|\nabla u^H|)dx+\int_{H}j(u^H(\sigma_H(x)),|(\nabla u^H)(\sigma_H(x))|)dx \\
&		=\int_{H}j(u^H,|\nabla u^H|)dx+\int_{H}j(w,|\nabla w|)dx \\
&		=\int_{\{u>v\}\cap H}j(u,|\nabla u|)dx+\int_{\{u>v\}\cap H}j(v,|\nabla v|)dx \\
&		+\int_{\{u\leq v\}\cap H}j(v,|\nabla v|)dx+\int_{\{u\leq v\}\cap H}j(u,|\nabla u|)dx \\
&		=\int_{H}j(u,|\nabla u|)dx+\int_{H}j(v,|\nabla v|)dx,
	\end{align*}
	which concludes the proof
\end{proof}
\medskip

\section{Generalized Polya-Szeg\"o inequality}
\label{PSineq}

The first main result of the paper is the following

\begin{theorem}
	\label{gpz}
Let $\varrho:[0,\infty)\times\R^N\to\R$ be a Borel measurable function. For any function $u\in W^{1,p}_+(\R^N)$,
let us set
$$
J(u)=\int_{\R^N} \varrho(u,\nabla u)dx.
$$
Moreover, let $(H_n)_{n\geq 1}$ be a dense sequence in the set of closed half spaces containing~$0_{\R^N}$. 
For $u\in W^{1,p}_+(\R^N)$, define a sequence $(u_n)$ by setting
$$
\begin{cases}
u_0=u  &\\
u_{n+1}=u_n^{H_1\ldots H_{n+1}}. &
\end{cases}
$$
Assume that the following conditions hold:
\begin{enumerate}
	\item  $$-\infty<J(u)<+\infty;$$
	\item 
	\begin{equation} 
	\label{monotonicity}
\liminf_n J(u_n)\leq J(u);
    \end{equation}
	\item if $(u_n)$ converges weakly to some $v$ in $W^{1,p}_+(\R^N)$, then 
	$$
	J(v)\leq\liminf_n J(u_n).
	$$
\end{enumerate}
Then
$$
J(u^*)\leq J(u).
$$
\end{theorem}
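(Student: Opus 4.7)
The plan is to exploit three ingredients: (a) invariance of the $W^{1,p}$ norm under polarization, furnished by Lemma~\ref{concretelem}; (b) the strong $L^p$ convergence $u_n\to u^*$ that the density of $(H_n)$ in $\mathcal{H}$ provides via~\cite{explicit,JvS}; and (c) the weak lower semicontinuity built into assumption~(3). The strategy is to first upgrade the $L^p$ convergence of $(u_n)$ to weak convergence in $W^{1,p}$, and then to chain assumptions (2) and (3) to obtain the conclusion.

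First I would check that the sequence $(u_n)$ is bounded in $W^{1,p}(\R^N)$. Since polarization preserves the distribution function, $u_n\sim u$ gives $\|u_n\|_p=\|u\|_p$ for all $n$. To control the gradient, I would apply Lemma~\ref{concretelem}(3) with the Borel measurable integrand $j(s,t)=t^p$: because $u\in W^{1,p}_+(\R^N)$ both sides are finite, so $\|\nabla u^H\|_p=\|\nabla u\|_p$ at each polarization step, and iterating yields $\|\nabla u_n\|_p=\|\nabla u\|_p$ for every $n\in\N$. Hence $(u_n)$ is uniformly bounded in the reflexive Banach space $W^{1,p}(\R^N)$.

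Next I would identify the weak limit. By the cited result of~\cite{explicit,JvS}, the density of the polarizers forces $u_n\to u^*$ strongly in $L^p(\R^N)$. Given any subsequence of $(u_n)$, reflexivity of $W^{1,p}$ supplies a further subsequence converging weakly in $W^{1,p}_+(\R^N)$ to some $v$; by uniqueness of limits in $L^p$, necessarily $v=u^*$. Since every subsequence of $(u_n)$ thus admits a further subsequence converging weakly to the same limit $u^*$, the full sequence $(u_n)$ converges weakly to $u^*$ in $W^{1,p}_+(\R^N)$.

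With full weak convergence in hand, assumption~(3) applied to $v=u^*$ gives
$$
J(u^*)\leq \liminf_n J(u_n),
$$
and assumption~(2) gives $\liminf_n J(u_n)\leq J(u)$; chaining these yields $J(u^*)\leq J(u)$. The only nonroutine point — and really the crux of the whole argument — is the invariance of $\|\nabla u\|_p$ under polarization, so that the iterates $(u_n)$ stay in a fixed ball of $W^{1,p}$; once that is granted by Lemma~\ref{concretelem}, everything else is a standard weak-compactness plus lower-semicontinuity argument. It is worth emphasizing that no growth hypothesis on $\varrho$ enters: all the required regularity of $J$ is outsourced to hypotheses~(2)--(3), which is precisely what allows the theorem to apply in settings beyond the classical Almgren--Lieb framework.
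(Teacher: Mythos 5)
Your proposal is correct and follows essentially the same route as the paper: gradient-norm invariance under polarization from Lemma~\ref{concretelem} with $j(s,t)=t^p$, strong $L^p$ convergence $u_n\to u^*$ from the cited approximation results, identification of the weak $W^{1,p}$ limit as $u^*$, and then chaining hypotheses (3) and (2). Your explicit subsequence-of-a-subsequence argument to upgrade to weak convergence of the full sequence is a slightly more careful rendering of the step the paper dispatches with ``one can easily check,'' but the substance is identical.
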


\begin{proof}
By the (explicit) approximation results contained in~\cite{explicit,JvS},
we know that $u_n\to u^*$ in $L^p(\R^N)$ as $n\to\infty$.
Moreover, by Lemma~\ref{concretelem} applied with $j(s,|\xi|)=|\xi|^p$, we have
\begin{equation}
	\label{uguagplapl}
	\|\nabla u\|_{L^p(\R^N)}=\|\nabla u_0\|_{L^p(\R^N)}=\cdots=\|\nabla u_n\|_{L^p(\R^N)},
	\quad\text{for all $n\in\N$}.
\end{equation}
In particular, up to a subsequence, $(u_n)$ is weakly convergent 
to some function $v$ in $W^{1,p}(\R^N)$. By uniqueness of the weak limit in $L^p(\R^N)$
one can easily check that $v=u^*$, namely $u_n\rightharpoonup u^*$ in $W^{1,p}(\R^N)$.
Hence, using assumption (3) and ~\eqref{monotonicity}, we have 
\begin{equation}
	J(u^*)\leq \liminf_n J(u_n)\leq J(u),
\end{equation}
concluding the proof.
\end{proof}

\begin{remark}
	A quite large class of functionals $J$ which satisfy assumption~\eqref{monotonicity} 
	of the previous Theorem is provided by Lemma~\ref{concretelem}.
\end{remark}

\begin{corollary}
	\label{applgPSineq}
	Let $j:[0,\infty)\times[0,\infty)\to\R$ be a function satisfying the following assumptions:
	\begin{enumerate}
		\item $j(\cdot,t)$ is continuous for all $t\in [0,\infty)$;
		\item $j(s,\cdot)$ is convex for all $s\in [0,\infty)$ and continuous at zero;
		\item $j(s,\cdot)$ is nondecreasing for all $s\in [0,\infty)$.
	\end{enumerate}
	Then, for all function $u\in W^{1,p}_+(\R^N)$ such that
	$$
	\int_{\R^N} j(u,|\nabla u|)dx<\infty,
	$$
	we have
	$$
	\int_{\R^N} j(u^*,|\nabla u^*|)dx\leq \int_{\R^N} j(u,|\nabla u|)dx.
	$$
\end{corollary}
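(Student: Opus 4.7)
The plan is to apply Theorem~\ref{gpz} with $\varrho(s,\xi):=j(s,|\xi|)$, so that the functional $J(u)=\int_{\R^N}j(u,|\nabla u|)\,dx$ is exactly the one appearing in the corollary. Three hypotheses then need to be checked. Hypotheses (1) and (2) come from Lemma~\ref{concretelem}(3) applied inductively along the polarization sequence $(u_n)$: each single polarization leaves the integral invariant, so $J(u_n)=J(u)$ for every $n$; this gives both the finiteness $-\infty<J(u_n)<+\infty$ and the identity $\liminf_n J(u_n)=J(u)$. If one wishes to rule out $J(u)=-\infty$ a~priori, one may harmlessly replace $j(s,t)$ by $\tilde j(s,t):=j(s,t)-j(s,0)\ge 0$: this modification inherits all the hypotheses of the corollary, and it changes $J(u)$ and $J(u^*)$ by the same equimeasurable quantity $\int_{\R^N}j(u,0)\,dx$, so the desired inequality is equivalent for $j$ and $\tilde j$.

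The heart of the proof is hypothesis (3): the sequential weak lower semicontinuity of $J$ on $W^{1,p}_+(\R^N)$. The structural assumptions on $j$ are tailored for exactly this step. Continuity of $j(\cdot,t)$ in~$s$, combined with convexity of $j(s,\cdot)$ and continuity at the origin (which upgrades automatically to continuity on all of $[0,\infty)$), makes $\varrho$ a Carath\'eodory integrand; moreover, since $j(s,\cdot)$ is convex and nondecreasing, the composition with the convex function $\xi\mapsto|\xi|$ yields convexity of $\varrho(s,\cdot)$ on $\R^N$. With these two features in hand, I would extract from $u_n\rightharpoonup v$ in $W^{1,p}(\R^N)$ a subsequence with $u_n\to v$ almost everywhere (using boundedness in $W^{1,p}$ from~\eqref{uguagplapl}, Rellich on balls, and a diagonal argument) and invoke the classical De~Giorgi--Ioffe weak lower semicontinuity theorem --- or, after the normalization $j\mapsto\tilde j\ge 0$, its elementary form based on Mazur's lemma applied to $\nabla u_n\rightharpoonup \nabla v$ in $L^p$ together with Fatou's lemma in the $s$-variable --- to conclude
$$
\int_{\R^N}j(v,|\nabla v|)\,dx\leq\liminf_{n\to\infty}\int_{\R^N}j(u_n,|\nabla u_n|)\,dx.
$$

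The main obstacle is precisely this lower semicontinuity step in the absence of any growth bound on $j$: one cannot extend $J$ continuously from $W^{1,p}$ to an $L^p$-type space, so the argument must proceed via the Carath\'eodory/convexity structure of $\varrho$ combined with a.e.\ convergence rather than by any soft functional-analytic means. Once hypothesis~(3) is secured, Theorem~\ref{gpz} immediately delivers $J(u^*)\leq J(u)$, which is the assertion of the corollary.
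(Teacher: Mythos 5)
Your proposal is correct and follows essentially the same route as the paper: apply Theorem~\ref{gpz} with $\varrho(s,\xi)=j(s,|\xi|)$, obtain hypotheses (1)--(2) from the polarization invariance in Lemma~\ref{concretelem}(3), and obtain the weak lower semicontinuity hypothesis (3) from the convexity of $\xi\mapsto j(s,|\xi|)$ (which, as you note, needs both convexity and monotonicity of $j(s,\cdot)$) via Ioffe-type results. You merely supply more detail than the paper does on the semicontinuity step (a.e.\ convergence via Rellich plus a diagonal argument, and the normalization $\tilde j=j-j(\cdot,0)\ge 0$), which the paper delegates to the references \cite{ioffe1,ioffe2}.
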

\begin{proof}
	The assumptions on $j$ imply that $\{\xi\mapsto j(s,|\xi|)\}$ is convex
	so that the weak lower semicontinuity assumption of Theorem~\ref{gpz} holds	
	(we refer the reader e.g.\ to the papers~\cite{ioffe1,ioffe2} by A.\ Ioffe).
	Also, assumption~\eqref{monotonicity} of Theorem~\ref{gpz} is provided by 
	means of Lemma~\ref{concretelem}.
\end{proof}

\begin{remark}
	In~\cite[Theorem 4.3]{brock}, F.\ Brock proved Corollary~\ref{applgPSineq}
	for Lipschitz functions having compact support. In order to prove the most
	interesting cases in the applications, the inequality has to hold for 
	functions $u$ in $W^{1,p}_+(\R^N)$. This forces him to assume some growth conditions
	of the Lagrangian $j$, for instance to assume that there exists a positive constant
	$K$ and $q\in [p,p^*]$ such that
	$$
	|j(s,|\xi|)|\leq K(s^q+|\xi|^p),\quad\text{for all $s\in\R_+$ and $\xi\in\R^N$}.
	$$
	By our approach, instead, can include integrands such as 
	$$
	j(s,|\xi|)=\frac{1}{2}(1+s^{2\alpha})|\xi|^p,\quad\text{for all $s\in\R_+$ and $\xi\in\R^N$},
	$$
	for some $\alpha>0$, which have meaningful physical 
	applications (for instance quasi-linear Schr\"odinger equations, see~\cite{liuwang}
	and references therein).
	We also stress that the approach of~\cite{brock} cannot yield the establishment
	of equality cases (see Theorem~\ref{idcases}).
\end{remark}

\begin{corollary}
	\label{finaleconcr2}
	Let $m\geq 1$ and $p_1,\dots,p_m\in(1,\infty)$. Then
	$$
	\sum_{i=1}^m\int_{\R^N}|D_i u^*|^{p_i}dx\leq\sum_{i=1}^m\int_{\R^N}|D_i u|^{p_i}dx,
	$$
	for all $u\in\bigcap_{i=1}^m W^{1,p_i}_+(\R^N)$.
\end{corollary}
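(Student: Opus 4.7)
The plan is to mimic the proof of Theorem \ref{gpz}, applied separately to each ``anisotropic'' functional
$$
J_i(u):=\int_{\R^N}|D_i u|^{p_i}\,dx,\qquad i=1,\dots,m,
$$
and then to add the resulting inequalities. The crucial ingredient that makes this work is the fact, already stated in Lemma~\ref{concretelem}(2), that polarization preserves the $L^p$ norm of each partial derivative individually, for every exponent $p\in(1,\infty)$. This lets us handle every index $i$ at its own $p_i$, without needing a single common Sobolev exponent.

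First, I would fix a dense sequence $(H_n)_{n\geq 1}$ of closed half spaces containing $0_{\R^N}$ and construct the iterated-polarization sequence $(u_n)$ exactly as in Theorem~\ref{gpz}. Iterating the identity in Lemma~\ref{concretelem}(2) yields
$$
\|D_i u_n\|_{L^{p_i}(\R^N)}=\|D_i u\|_{L^{p_i}(\R^N)}\qquad \text{for every }n\in\N,\ i=1,\dots,m,
$$
and equimeasurability gives $\|u_n\|_{L^{p_i}}=\|u\|_{L^{p_i}}$. So the sequence $(u_n)$ is bounded in $W^{1,p_i}(\R^N)$ for each $i$.

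Next, by the explicit approximation results of \cite{explicit,JvS}, $u_n\to u^*$ in $L^{p_i}(\R^N)$ for every $i$ (since $u\in L^{p_i}_+(\R^N)$). Combining this with the $W^{1,p_i}$ boundedness and the uniqueness of weak limits in $L^{p_i}$, one concludes that $u_n\rightharpoonup u^*$ weakly in $W^{1,p_i}(\R^N)$ for each $i$. Applying the standard weak lower semicontinuity of the convex functional $v\mapsto\int_{\R^N}|D_i v|^{p_i}dx$ on $W^{1,p_i}(\R^N)$ (see the Ioffe references mentioned in Corollary~\ref{applgPSineq}), one obtains
$$
\int_{\R^N}|D_i u^*|^{p_i}\,dx\leq \liminf_{n}\int_{\R^N}|D_i u_n|^{p_i}\,dx=\int_{\R^N}|D_i u|^{p_i}\,dx,
$$
the equality being what Lemma~\ref{concretelem}(2) provides. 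Summing over $i=1,\dots,m$ yields the claim.

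There is no real obstacle here: the only delicate point is the passage $u_n\rightharpoonup u^*$ in each $W^{1,p_i}$, but this is immediate once one knows (i) the strong $L^{p_i}$ convergence from the approximation theorem and (ii) the conservation of every $\|D_i u_n\|_{L^{p_i}}$ under polarization. Note that a direct invocation of Theorem~\ref{gpz} is not entirely clean since that statement is phrased for a single exponent $p$, whereas the integrand $\sum_i |D_i u|^{p_i}$ is not a function of $|\nabla u|$ alone; however, rerunning the proof of Theorem~\ref{gpz} componentwise, as outlined above, costs nothing extra.
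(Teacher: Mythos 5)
Your argument is correct and is essentially the paper's own proof, which simply says to combine Theorem~\ref{gpz} with the invariance~\eqref{dis4disocase} from Lemma~\ref{concretelem}; running the polarization/weak-lower-semicontinuity scheme componentwise for each $J_i$ and summing is exactly what that combination amounts to. (Minor aside: the integrand of Theorem~\ref{gpz} is $\varrho(u,\nabla u)$ with $\varrho$ defined on $[0,\infty)\times\R^N$, so it already accommodates $\sum_i|\xi_i|^{p_i}$; the only reason to rerun the proof, as you do, is the single Sobolev exponent in its hypotheses.)
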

\begin{proof}
	The assertion follows by a simple combination of Theorem~\ref{gpz}
	with inequality~\eqref{dis4disocase} of Lemma~\ref{concretelem}.
\end{proof}

\begin{theorem}
	\label{idcases}
	In addition to the assumptions of Theorem~\ref{gpz}, assume that 
\begin{equation}
	\label{strict}
\text{$J(u_n)\to J(u^*)$ as $n\to\infty$ implies that $u_n\to u^*$ in $D^{1,p}(\R^N)$ as $n\to\infty$}.
\end{equation}
Then
$$
J(u)=J(u^*)\,\,\Longrightarrow\,\, \|\nabla u\|_{L^p(\R^N)} =\|\nabla u^*\|_{L^p(\R^N)}.
$$
\end{theorem}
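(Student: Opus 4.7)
The key observation is that the sequence $(u_n)$ of iterated polarizations constructed in Theorem~\ref{gpz} has two properties pulling in opposite directions once the equality $J(u)=J(u^*)$ is known: on the one hand, its $L^p$-gradient norm is frozen by \eqref{uguagplapl}; on the other hand, the new assumption \eqref{strict} forces strong $D^{1,p}$-convergence along any subsequence on which $J(u_{n})$ is driven to $J(u^*)$. Matching these two facts is what should deliver the conclusion.

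First, I would revisit the chain of inequalities already obtained inside the proof of Theorem~\ref{gpz}. Since $u_n\rightharpoonup u^*$ in $W^{1,p}(\R^N)$ and $J$ is weakly lower semicontinuous along this sequence (assumption (3) of Theorem~\ref{gpz}), combined with the polarization-monotonicity hypothesis \eqref{monotonicity}, one gets
$$
J(u^*)\leq\liminf_{n}J(u_n)\leq J(u).
$$
Under the standing equality $J(u)=J(u^*)$, both inequalities collapse and in particular $\liminf_{n}J(u_n)=J(u^*)$.

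Next, I would extract a subsequence $(u_{n_k})$ realising the liminf, so that $J(u_{n_k})\to J(u^*)$. Applying hypothesis~\eqref{strict} to this subsequence yields $u_{n_k}\to u^*$ strongly in $D^{1,p}(\R^N)$, hence
$$
\|\nabla u_{n_k}\|_{L^p(\R^N)}\longrightarrow \|\nabla u^*\|_{L^p(\R^N)}.
$$
On the other hand, \eqref{uguagplapl} (which is Lemma~\ref{concretelem} applied with $j(s,|\xi|)=|\xi|^p$) tells us that $\|\nabla u_n\|_{L^p(\R^N)}=\|\nabla u\|_{L^p(\R^N)}$ for every $n\in\N$. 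Passing to the limit along the subsequence $(n_k)$ yields $\|\nabla u\|_{L^p(\R^N)}=\|\nabla u^*\|_{L^p(\R^N)}$, which is the desired identity.

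I do not expect a major obstacle: the argument is essentially a bookkeeping exercise combining the sandwich coming from Theorem~\ref{gpz}, the rigidity condition~\eqref{strict}, and the gradient-norm conservation~\eqref{uguagplapl}. The only mildly delicate point is the reading of~\eqref{strict}: because one only controls $\liminf_n J(u_n)$ rather than the full limit, the hypothesis must be interpreted as applying to any subsequence along which $J(u_n)$ converges to $J(u^*)$, and not merely to the whole sequence; with that (natural) interpretation the proof goes through without further complications.
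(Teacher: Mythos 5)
Your proposal is correct and follows essentially the same route as the paper: combine the sandwich $J(u^*)\leq\liminf_n J(u_n)\leq J(u)$ with hypothesis~\eqref{strict} to get strong $D^{1,p}$-convergence, then pass to the limit in the conserved gradient norms~\eqref{uguagplapl}. In fact your subsequence extraction is slightly more careful than the paper's own argument, which asserts $J(u^*)=\lim_n J(u_n)$ outright even though the abstract hypotheses only control the $\liminf$; your reading of~\eqref{strict} as applying along any subsequence with $J(u_{n_k})\to J(u^*)$ is exactly the right way to close that small gap.
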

\begin{proof}
Assume that $J(u)=J(u^*)$. Then, by assumption~\eqref{monotonicity}, we obtain
$$
J(u^*)=\lim_n J(u_n)=J(u).
$$
In turn, by assumption, $u_n\to u^*$ in $D^{1,p}(\R^N)$ as $n\to\infty$. Then, taking 
the limit inside equalities~\eqref{uguagplapl}, we conclude the assertion.
\end{proof}

\begin{remark}
	Assume that $\{\xi\mapsto j(s,|\xi|)\}$ is strictly convex for any $s\in\R_+$ and
	there exists $\nu'>0$ such that $j(s,|\xi|)\geq\nu' |\xi|^p$ for all $s\in\R_+$ and $\xi\in\R^N$. 
	Then assumption~\eqref{strict} is fulfilled for $J(u)=\int_{\R^N}j(u,|\nabla u|)dx$. 
	We refer to~\cite[Section 3]{visintin}.
\end{remark}

\begin{remark}
	Equality cases of the type $\|\nabla u\|_{L^p(\R^N)}=\|\nabla u^*\|_{L^p(\R^N)}$
	have been completely characterized  in the breakthrough paper by Brothers and Ziemer~\cite{bzim}. 
\end{remark}

Let us now set
$$
\displaystyle{M={\rm esssup}_{\R^N} u={\rm esssup}_{\R^N} u^*},\qquad
C^*=\{x\in\R^N:\nabla u^*(x)=0\}.
$$

\begin{corollary}
	\label{identitcor}
	Assume that $\{\xi\mapsto j(s,|\xi|)\}$ is strictly convex and there exists a positive constant
	$\nu'$ such that
	$$
	j(s,|\xi|)\geq\nu' |\xi|^p,\quad\text{for all $s\in\R$ and $\xi\in\R^N$}.
$$	
	Moreover, assume that
$$
\int_{\R^N} j(u,|\nabla u|)dx=\int_{\R^N} j(u^*,|\nabla u^*|)dx,\quad
\mu(C^*\cap (u^*)^{-1}(0,M))=0.
$$
Then there exists $x_0\in\R^N$ such that
$$
u(x)=u^*(x-x_0),\quad\text{for all $x\in\R^N$},
$$
namely $u$ is radially symmetric after a translation in $\R^N$.
\end{corollary}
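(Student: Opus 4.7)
The plan is to chain two ingredients: first reduce the equality of the general functional $J(u)=\int j(u,|\nabla u|)\,dx$ to equality of the classical Dirichlet-type functional $\|\nabla u\|_p=\|\nabla u^*\|_p$ via Theorem~\ref{idcases}, then invoke the Brothers--Ziemer characterization of equality in the classical Polya--Szeg\"o inequality to conclude that $u$ is a translate of $u^*$.

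More precisely, I would first check that the hypotheses of Theorem~\ref{idcases} are verified under the assumptions of the corollary. The functional $J$ with $j$ as given satisfies the hypotheses of Corollary~\ref{applgPSineq} (continuity, convexity in the gradient variable, monotonicity), so the first three assumptions of Theorem~\ref{gpz} automatically hold on the polarization sequence $(u_n)$, thanks to the invariance property of Lemma~\ref{concretelem}(3) and the weak lower semicontinuity arguments recalled in the proof of Corollary~\ref{applgPSineq}. The additional strong convergence condition~\eqref{strict} is precisely what the remark following Theorem~\ref{idcases} guarantees from the combination of strict convexity of $\xi\mapsto j(s,|\xi|)$ and the coercivity bound $j(s,|\xi|)\geq\nu'|\xi|^p$ (citing Visintin's argument as done in the remark). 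Therefore the hypothesis $J(u)=J(u^*)$ of the corollary immediately yields, via Theorem~\ref{idcases}, the identity
\[
\|\nabla u\|_{L^p(\R^N)}=\|\nabla u^*\|_{L^p(\R^N)}.
\]

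At this point $u\in W^{1,p}_+(\R^N)$ is a nonnegative Sobolev function whose gradient has the same $L^p$ norm as that of its Schwarz rearrangement, and by hypothesis the set of critical points of $u^*$ inside the open range $(0,M)$ is negligible, i.e.\ $\mu(C^*\cap (u^*)^{-1}(0,M))=0$. These are exactly the assumptions of the Brothers--Ziemer equality theorem~\cite{bzim}, which in this setting asserts that the only way to realize equality in the classical $L^p$ Polya--Szeg\"o inequality is for $u$ to coincide with a translate of $u^*$. Applying that theorem produces a point $x_0\in\R^N$ such that $u(x)=u^*(x-x_0)$ for a.e.\ $x\in\R^N$, which is the desired conclusion.

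The main conceptual step is really the first one: one must be sure that the equality of the nonlinear functional $J$ on $u$ and $u^*$ can be upgraded to strong $D^{1,p}$-convergence of the polarization sequence $u_n$ to $u^*$, so that $\|\nabla u_n\|_p=\|\nabla u\|_p$ passes to the limit to give $\|\nabla u\|_p=\|\nabla u^*\|_p$. This is precisely where the strict convexity of $\xi\mapsto j(s,|\xi|)$ is used in an essential way, since weak convergence plus convergence of a strictly convex energy bounded below by $\nu'|\xi|^p$ forces strong convergence of the gradients. Once this is in place the remainder of the argument is a direct appeal to~\cite{bzim} and requires no further work.
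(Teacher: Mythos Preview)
Your proposal is correct and follows exactly the route taken in the paper: apply Theorem~\ref{idcases} (whose hypotheses hold by the strict convexity and coercivity assumptions via the remark citing~\cite{visintin}) to obtain $\|\nabla u\|_{L^p}=\|\nabla u^*\|_{L^p}$, and then invoke the Brothers--Ziemer result~\cite[Theorem~1.1]{bzim}. The paper's own proof is simply the one-line statement ``combine Theorem~\ref{idcases} with~\cite[Theorem~1.1]{bzim}'', which you have correctly unpacked.
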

\begin{proof}
	It is sufficient to combine Theorem~\ref{idcases} with~\cite[Theorem 1.1]{bzim}.
\end{proof}
\medskip

\end{document}